\documentclass[12pt]{amsart}

\usepackage{graphicx, fullpage}
\usepackage{amsmath,amssymb,amsthm,amscd, bm}
\usepackage{fancyhdr}
\usepackage[mathscr]{eucal}
\usepackage{amsfonts}
\usepackage[T1]{fontenc}
\usepackage{xspace}
\usepackage{esint}
\theoremstyle{plain}
\newtheorem {thm}{Theorem}[section]

\newtheorem {prop}[thm]{Proposition}
\numberwithin{equation}{section}

\def\cal{\mathcal}

\newcommand{\cF}{\cal F}

\newcommand{\cE}{{\cal E}}

\newcommand{\cP}{{\cal P}}

\newcommand{\bC}{\mathbb C}
\newcommand{\bE}{\mathbb E}
\newcommand{\bP}{\mathbb P}

\newcommand{\bL}{\mathbb L}
\newcommand{\bR}{\mathbb R}

\newcommand{\tr}{\text{tr}\,}

\newcommand{\Id}{\text{Id}}
\newcommand{\GL}{\text{GL}}

\newcommand{\U}{\text{U}}

\newcommand{\re}{\text{Re}\,}
\newcommand{\im}{\text{Im}\,}

\newcommand{\vo}{\text{Vol}}

\begin{document}
\title{\bf Ellipsoids in pseudoconvex domains}
\author{L\'aszl\'o Lempert}
\address{Department of  Mathematics,
Purdue University, 150N University Street, West Lafayette, IN
47907-2067, USA}
\subjclass[2020]{32T, 32U05, 52A40}

\abstract
We consider the problem of maximizing the volume of hermitian ellipsoids inscribed in a given pseudoconvex domain in 
complex Euclidean space. We prove existence and uniqueness, and give a characterization of the maximizer. 
 
\endabstract
\maketitle
\section{Introduction}    

Consider a pseudoconvex domain $\Omega\subset\bC^n$ (\cite[Definition 2.6.8]{H91}) and the hermitian 
ellipsoids---images of the unit ball under invertible complex linear transformations---that it contains. 
How to find among these ellipsoids the one(s) 
of maximal volume? This is a complex analog of a convex geometry problem (whose dual) John posed and essentially
solved in  \cite{J48}, see also \cite{B97}. In that problem one is to find the ellipsoid of maximal volume inscribed in a given
bounded convex domain $\Omega\subset\bR^n$. One complex analog still considers bounded convex domains
$\Omega$, but this time in $\bC^n$, and searches for the inscribed hermitian ellipsoid of maximal volume. 
The arguments of Arocha, 
Bracho, and Montejano in \cite{A+22} give that this maximal hermitian ellipsoid exists and is unique. 
Earlier related work is by Gromov, Rabotin, and Ma \cite{G67, M97, R86}.

However, in complex geometry what oftentimes corresponds to convexity is not ordinary convexity, but the more
general notion of pseudoconvexity. It therefore makes sense to look into the problem of maximal hermitian ellipsoids
inscribed in a bounded pseudoconvex domain $\Omega\subset\bC^n$. We emphasize that  our definition of a 
hermitian ellipsoid implies that its center is at $0$, and maximizing volume over inscribed translates of hermitian ellipsoids 
is a problem of a different nature.
We discuss that problem in section 5, to the extent it deserves.

Denote by $\bE_0$ the space of hermitian ellipsoids in $\bC^n$. This space is in bijective correspondence with
the space of positive definite hermitian forms $h:\bC^n\times\bC^n\to\bC$, the ellipsoid corresponding to
$h$ being
\begin{equation} 
E_h=\{z\in\bC^n:h(z,z)<1\}.
\end{equation} 
\begin{thm} 
Suppose $\Omega\subset\bC^n$ is a bounded pseudoconvex domain, $0\in\Omega$. Among $E\in\bE_0$ 
contained in $\Omega$ there is at least one that has maximal volume. $E_h$ maximizes volume if and only 
if there is a Borel measure $\mu$ on $\partial\Omega\cap\partial E_h$ such that for every linear operator 
$T:\bC^n\to\bC^n$
\begin{equation} 
\int_{\partial\Omega\cap\partial E_h}h(Tz,z)\,d\mu(z)=\tr T.
\end{equation}
\end{thm}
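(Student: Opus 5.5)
The plan is to work in the matrix space. Write every $E\in\bE_0$ as $E=A(B)$, where $B$ is the unit ball and $A\in\GL(n,\bC)$; then $h(z,z)=|A^{-1}z|^2$ and $\vo(E)=c_n|\det A|^2$. Let
$$S=\{A\in\GL(n,\bC):A(B)\subset\Omega\}.$$
This set is invariant under $A\mapsto AU$ for $U\in\U(n)$, so it really lives on the symmetric space $\GL(n,\bC)/\U(n)$ of positive hermitian matrices.

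\textbf{Existence.} This step is routine. Since $\Omega$ is bounded, $S$ is bounded in matrix space. Since $0\in\Omega$, $S\neq\emptyset$. The function $|\det A|^2$ is continuous on the closure of $S$. A maximizing sequence has a limit $A$, and $A$ is invertible because $\det$ stays away from $0$ along the sequence. Moreover $A(B)\subset\Omega$: each point $Az$ with $|z|<1$ lies in $A_k(rB)$ for some $r<1$ and all large $k$, and $A_k(\overline{rB})$ is a compact subset of $\Omega$ converging to $A(\overline{rB})$. (This gives $Az\in\overline\Omega$ at once; to upgrade to $Az\in\Omega$ I will use that $\Omega$ is open together with a small dilation argument.)

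\textbf{Geodesic convexity of $S$.} This is the key structural lemma. Take a continuous plurisubharmonic exhaustion $u$ of $\Omega$, which exists by pseudoconvexity, and set $v(A)=\sup_{|z|<1}u(Az)$.
\begin{itemize}
\item For fixed $A_0\in S$ and hermitian $H$, I will study $\zeta\mapsto A_0e^{\zeta H}$. The function $\zeta\mapsto v(A_0e^{\zeta H})$ is a supremum of subharmonic functions, hence subharmonic, wherever it is defined.
\item Since $e^{i t H}$ is unitary, $e^{itH}B=B$, so this function depends only on $\re\zeta$. A subharmonic function of $\re\zeta$ alone is convex in $\re\zeta$.
\item A continuity argument using that $u$ is an exhaustion then shows: if $A_0,A_0e^{H}\in S$, then $A_0e^{tH}\in S$ for all $t\in[0,1]$.
\end{itemize}
So $S/\U(n)$ is geodesically convex. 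Along such a geodesic $\log|\det(A_0e^{tH})|^2=\log|\det A_0|^2+2t\tr H$ is affine.

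\textbf{Uniqueness.} This is where I expect the main obstacle. Given two maximizers, change coordinates by a linear map so that they become $B$ and $D(B)$ with $D$ diagonal positive. Then $|\det D|=1$. The union $B\cup D(B)$ is a complete Reinhardt domain contained in $\Omega$. Its envelope of holomorphy is its logarithmically convex hull, and this hull is contained in $\Omega$ because $\Omega$ is pseudoconvex. In logarithmic coordinates $x_j=\log|z_j|$ we thus get the convex hull of
$$\{\textstyle\sum e^{2x_j}<1\}\quad\text{and}\quad\{\textstyle\sum e^{2x_j}/d_j^2<1\}.$$
The step I will try to prove is that, unless $D=I$, this hull contains an ellipsoid $\{\sum |z_j|^2/a_j^2<1\}$ with $\prod a_j>1$. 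The first candidate to test is $a_j=\max(1,d_j)$ rescaled slightly, or more generally a coordinatewise interpolation between $1$ and $d_j$. This uses only the Reinhardt structure, not just the geodesic $D^t$, along which the volume is constant. Such an ellipsoid would contradict maximality.

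\textbf{The characterization.} I will use first variations along the geodesics $Ae^{tH}$.

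For \emph{necessity}, let $h$ be a maximizer and $K=\partial\Omega\cap\partial E_h$. Consider the real-linear functionals $T\mapsto h(Tz,z)$ for $z\in K$, restricted to hermitian $T$. If $\tr$ were not in the closed convex cone these functionals generate, Hahn--Banach would give a hermitian $H$ with $\tr H>0$ and $h(Hz,z)<0$ on $K$. Pushing the contact points inward along $Ae^{tH}$ then shows $Ae^{tH}\in S$ for small $t>0$. This requires a compactness argument near $K$ and a small shrink away from $K$. Since the volume increases along this curve, we get a contradiction. A finite convex combination in that cone gives the measure $\mu$. To pass from hermitian $T$ to all linear $T$, split $T$ into hermitian and skew-hermitian parts: for skew-hermitian $T$, $h(Tz,z)$ is purely imaginary and the skew part has purely imaginary trace, so I will apply the same cone argument to $iT$.

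For \emph{sufficiency}, suppose $\mu$ exists and some $E_{h'}\subset\Omega$ has larger volume. Geodesic convexity gives $H$ with $\tr H>0$ and $Ae^{tH}\in S$ for $t\in[0,1]$. Containment forces $h(Hz,z)\le 0$ on $K$ to first order. Integrating against $\mu$ then gives $\tr H\le 0$, a contradiction.
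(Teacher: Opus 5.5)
Your argument for what Theorem 1.1 actually asserts follows the paper's route (Section 3 together with Theorem 1.3). Existence is by compactness. Necessity is John's first-variation argument plus separation. Sufficiency uses geodesic convexity of the inscribed ellipsoids, then the first-order test $h(Hz,z)\le 0$ at contact points integrated against $\mu$. In the existence step no dilation argument is needed: your own observation $Az\in A_k(rB)\subset\Omega$ already gives $Az\in\Omega$. Two parts of the proposal need correction.

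First, discard the ``Uniqueness'' section. Theorem 1.1 asserts no uniqueness, and the step you intend to prove fails for every $D\neq I$. If $\prod_j d_j=1$, the arithmetic--geometric mean inequality gives $\prod_j|z_j|<n^{-n/2}$ on both $B$ and $D(B)$. In logarithmic coordinates both therefore lie in the convex set $\{\sum_j x_j<-\frac n2\log n\}$, and so does their logarithmically convex hull. But $\{\sum_j|z_j|^2/a_j^2<1\}$ contains points with $\prod_j|z_j|$ arbitrarily close to $n^{-n/2}\prod_j a_j$, so it fits in the hull only if $\prod_j a_j\le 1$. This is no accident. Proposition 4.3 gives a bounded pseudoconvex domain with infinitely many maximal ellipsoids, and uniqueness needs an extra hypothesis such as the one in Proposition 4.1.

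Second, the ``continuity argument'' in your geodesic-convexity lemma is where the real work lies, and it cannot be run in the geodesic parameter $t$ alone. On the maximal interval $[0,b)$ where $\overline{A_0e^{tH}(B)}\subset\Omega$, the function $t\mapsto v(A_0e^{tH})$ is finite and convex. Nothing stops it from tending to $+\infty$ as $t\to b$, and convexity permits this.

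Your subharmonicity observation gives a bound only once the whole geodesic is known to be inscribed. This is the paper's Proposition 2.2, where the supremum is taken over $\lambda\overline{B}$, $\lambda<1$, to keep it finite. To reach that situation you need an auxiliary parameter. The paper moves the far endpoint along a path of ellipsoids with closures in $\Omega$. It shows the set of parameters for which the whole geodesic is inscribed is open and closed. It then removes the assumption $\overline{\cE(0)},\overline{\cE(1)}\subset\Omega$ by shrinking.

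Alternatively, you can use the endpoints $\lambda A_0(B)$ and $\lambda A_0e^H(B)$, whose geodesic is $\lambda A_0e^{tH}(B)$. Consider the set of $\lambda\in(0,1)$ for which $\overline{\lambda A_0e^{tH}(B)}\subset\Omega$ for all $t\in[0,1]$.
\begin{itemize}
\item It contains small $\lambda$, since everything is then near $0\in\Omega$.
\item It is open by compactness.
\item It is closed in $(0,1)$ by your convexity bound, because the endpoint values are bounded uniformly for $\lambda$ bounded away from $1$.
\end{itemize}
Hence it is all of $(0,1)$, and the union over $\lambda<1$ gives the claim. With this supplied, your device of taking the supremum over the ellipsoid first is a tidy substitute for the paper's pointwise maximum principle on the strip. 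That supremum depends only on $\re\zeta$ and is therefore convex.
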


An ellipsoid $E\in\bE_0$ that has maximal volume among those inscribed in $\Omega$ will simply be called
maximal ellipsoid (in $\Omega$).

With $\Delta\subset\bC$ a disc, we will call the range of a nonconstant holomorphic map $\Delta\to\bC^n$ a 
holomorphic disc.

\begin{thm} 
The maximal ellipsoid in Theorem 1.1 is not necessarily unique; but it is unique if $\partial\Omega$ contains
no holomorphic disc.
\end{thm}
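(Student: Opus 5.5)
The proof has two parts: a non-uniqueness example, and a uniqueness argument in which log-convexity of volume is paired with plurisubharmonicity to force a holomorphic disc into $\partial\Omega$.

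\textbf{Non-uniqueness.} I would take a bidisc-like pseudoconvex domain whose boundary is full of holomorphic discs. A natural candidate is a Reinhardt domain in $\bC^2$ such as
\[
\Omega=\{|z_1|<1,\ |z_2|<1\}.
\]
This one is convex, however, so by Arocha--Bracho--Montejano its maximal ellipsoid is unique. Instead I would use a nonconvex but pseudoconvex domain, for instance the Hartogs-type domain
\[
\Omega=\{|z_1|<2,\ |z_2|<2,\ |z_1z_2|<1\},
\]
and compare the family of diagonal ellipsoids $E=\{|z_1|^2/a^2+|z_2|^2/b^2<1\}$. Containment only constrains $ab$ (up to a fixed constant, given that $a,b\le 2$), while volume is proportional to $(ab)^2$. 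So a one-parameter family $a\mapsto(a,c/a)$ consists of maximizers, provided one checks that non-diagonal ellipsoids do no better. For that check I would use Theorem 1.1's characterization, or symmetrize under the torus action $(z_1,z_2)\mapsto(e^{i\theta_1}z_1,e^{i\theta_2}z_2)$ together with log-concavity of $\det$.

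\textbf{Uniqueness.} Suppose $E_{h_0}$ and $E_{h_1}$ are both maximal, with $h_0\ne h_1$. Write $h_j(z,w)=\langle A_jz,A_jw\rangle$ with $A_j$ invertible, and connect them by a holomorphic family. Concretely, set $h_0=\langle z,w\rangle$ after a linear change and $h_1=\langle Dz,w\rangle$ with $D$ diagonal positive, and define $A(\zeta)=D^{\zeta/2}$ for $\zeta$ in the strip $S=\{0\le\re\zeta\le1\}$. The relevant quantity is
\[
u(\zeta,z)=\sup_{|\xi|\le1}\phi\bigl(A(\zeta)^{-1}\xi\bigr)\quad\text{(more precisely, the Minkowski-type functional)},
\]
where $\phi$ is a bounded plurisubharmonic exhaustion-type function. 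More concretely, I would use that $E_{h_t}$ with $h_t=\langle D^tz,z\rangle$, $0<t<1$, is contained in $\Omega$. This is the key claim. Its proof: if $|D^{t/2}z|<1$, take the map $\zeta\mapsto D^{(t-\zeta)/2}z$, which is holomorphic on the strip and periodic in $\im\zeta$ in modulus. On $\re\zeta=0$ it lands in $E_{h_0}$ up to a unitary twist, and on $\re\zeta=1$ it lands in $E_{h_1}$. By circularity of ellipsoids, the boundary values $D^{-i s/2}D^{(t-\sigma)/2}z$ lie in $E_{h_\sigma}$ for $\sigma=0,1$. The Kontinuitätssatz / maximum principle for $-\log\delta_\Omega$ along this strip (compactified into an annulus via $\zeta\mapsto e^{2\pi\zeta/T}$ after rational approximation, or by Phragmén--Lindelöf using boundedness) then gives $D^{t/2}z\in\Omega$.

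Since $\log\vo(E_{h_t})=c-\tfrac t2\log\det D\cdot$const is affine in $t$, every $E_{h_t}$ is maximal. Taking instead $D^{\zeta}$-twisted discs through a point $p\in\partial E_{h_t}\cap\partial\Omega$, I would show that the disc $\zeta\mapsto D^{(t-\zeta)/2}p$ for $\zeta$ near $0$ stays in $\overline\Omega$. Its center is on $\partial\Omega$, and $-\log\delta_\Omega$ is subharmonic along it and maximal at the center. By the maximum principle, the disc lies in $\partial\Omega$. It is nonconstant unless $D p$ is proportional to $p$ in the relevant sense, i.e.\ $p$ lies in an eigenspace of $D$.

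\textbf{Main obstacle.} The hard step is excluding the degenerate case, where every contact point $p\in\partial\Omega\cap\partial E_{h_t}$ lies in a single eigenspace of $D$. Here I would invoke Theorem 1.1. The measure $\mu$ satisfies $\int h_t(Tz,z)\,d\mu=\tr T$ for all $T$. If the support of $\mu$ lay inside eigenspaces on which $D$ acts by scalars, then testing against projections onto the eigenspaces $V_\lambda$ gives $\mu$-weights proportional to $\dim V_\lambda$. Taking $T=\log D$ and using maximality of both ends, so that $\tfrac{d}{dt}\log\vo=0$ forces $\tr\log D=0$, I would derive a contradiction unless $D$ is scalar, and hence $D=\Id$, i.e.\ $h_0=h_1$. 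So some contact point is not an eigenvector, and the disc through it is a genuine holomorphic disc in $\partial\Omega$.
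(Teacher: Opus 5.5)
Your outline matches the paper's: pass to an interior point $E_{h_t}$ of the geodesic, which is again maximal; push discs through contact points into $\partial\Omega$; finish with the measure of Theorem 1.1. Your key claim is Theorem 1.3 and can simply be cited. Your non-uniqueness example is the paper's with $3$ replaced by $2$. It works once you check the criterion of Theorem 1.1 with the torus-invariant measure of mass $2$ on $\{|z_1|=a/\sqrt2,\ |z_2|=b/\sqrt2\}$. The torus-averaging alternative fails as stated for this nonconvex $\Omega$: averaging the forms keeps containment but lowers the volume, and averaging the maps needs convexity.

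\textbf{First gap: putting the disc into $\partial\Omega$.} The disc in question is $g(\zeta)=D^{-\zeta/2}p$ with $p\in\partial\Omega\cap\partial E_{h_t}$. It lies only in $\overline\Omega$. Along it, $-\log\delta_\Omega$ is $+\infty$ at the center and subharmonic only on the open set $g^{-1}(\Omega)$, so no maximum principle applies. The principle you are implicitly using is that a holomorphic disc in $\overline\Omega$ with center on $\partial\Omega$ lies in $\partial\Omega$. This is false for pseudoconvex domains: take $\Omega=\{(z_1,z_2):|z_1|<1,\ z_1\notin[0,1),\ |z_2|<1\}$ and the disc $\zeta\mapsto(\zeta/2,0)$.

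The missing ingredient is that the whole family $r\,g(\zeta)$, $|r|<1$, lies in $\Omega$, because $r\,g(\zeta)\in E_{h_{t+\re\zeta}}$. Exploiting this is exactly the paper's Proposition 4.2:
\begin{itemize}
\item Set $\Phi(r,\zeta)=r\,g(\zeta)$ and $P=\Phi^{-1}\Omega\subset\bC^2$, which is pseudoconvex.
\item Let $d$ be the distance to $\partial P$ in the fixed direction $(1,0)$. Then $-\log d$ is plurisubharmonic.
\item One has $d(r,\zeta)\ge 1-|r|$ near $(1,0)$, and $d(r,0)\le 1-r$ for $0<r<1$.
\item The maximum principle for $-\log d(r,\cdot)$ therefore forces $d(r,\zeta)=1-r$. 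Letting $r\to1$ gives $g(\zeta)\in\partial\Omega$.
\end{itemize}
The Euclidean $\delta_\Omega$ cannot replace $d$, since it gives no exact comparison between $\delta_\Omega(r\,g(\zeta))$ and $\delta_\Omega(r\,g(0))$.

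\textbf{Second gap: the degenerate case is misidentified.} The map $\zeta\mapsto D^{-\zeta/2}p$ is constant only when $Dp=p$. If $Dp=\lambda p$ with $\lambda\ne1$, it is the nonconstant disc $\zeta\mapsto\lambda^{-\zeta/2}p$, which the hypothesis already excludes.

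Also, the contradiction you claim when all contact points are eigenvectors does not exist. Testing with the projections gives $\mu(V_\lambda)=\dim V_\lambda$. Then $T=\log D$ returns only $\sum_\lambda \dim V_\lambda\log\lambda=\tr\log D=0$, which you already know. So ``some contact point is not an eigenvector'' is not established, and it is not what is needed.

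The correct dichotomy is as follows. Either $(\log D)p\neq 0$ for some contact point, which yields a holomorphic disc in $\partial\Omega$. Or $(\log D)p=0$ on the entire contact set. In that case $T=(\log D)^2$, which is self-adjoint for $h_t$, gives $\tr(\log D)^2=\int h_t(\log D\,z,\log D\,z)\,d\mu(z)=0$, hence $D=\Id$. This is the paper's step with $T=A^*A$.
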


For example, the maximal ellipsoid is unique if $\Omega$ is strongly pseudoconvex.

The proofs depend on a property of geodesics in $\bE_0$ (a symmetric space $\approx\GL(n,\bC)/\U(n)$).
Given $E_h,E_k\in\bE_0$, there is a unique positive self adjoint operator $A$ on $(\bC^n,h)$ such that
$AE_h=E_k$; the geodesic $\cE:[0,1]\to\bE_0$ between $E_h,E_k$ is then given by
\[
\cE(t)=A^tE_h\in\bE_0,\qquad 0\le t\le1.
\]
Equivalently, if hermitian forms $h_t$ are defined by
\[
h_t(z,w)=h(A^{-t}z,A^{-t}w),
\]
then $\cE(t)=E_{h_t}$. The key to both Theorems 1.1, 1.2 is
\begin{thm} 
Suppose $\Omega\subset\bC^n$ is a pseudoconvex domain and $\cE:[0,1]\to\bE_0$ is a geodesic. If the
endpoints $\cE(0),\cE(1)$ are inscribed in $\Omega$, then so are all $\cE(t)$, $0\le t\le 1$.
\end{thm}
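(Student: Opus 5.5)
Normalize first. After a complex linear change of coordinates we may take $h$ to be the standard form, $E_h=B$ the unit ball. Since $A$ is positive self adjoint, we may diagonalize it in a unitary basis, $A=\mathrm{diag}(e^{a_1},\dots,e^{a_n})$ with $a_j\in\bR$. Then $\cE(t)=A^tB$ is the ellipsoid $\{\sum_j e^{-2ta_j}|z_j|^2<1\}$. Pseudoconvexity is preserved by linear changes of variables, so nothing is lost.

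The plan is to complexify the parameter $t$ and use the Kontinuitätssatz (the Hartogs disc form of pseudoconvexity). For $\zeta$ in the strip $S=\{0<\re\zeta<1\}$, set
\[
F(\zeta,w)=A^{\zeta}w=(e^{\zeta a_1}w_1,\dots,e^{\zeta a_n}w_n),\qquad w\in B.
\]
This is holomorphic in $(\zeta,w)$, and $F(\zeta,\cdot)$ is a linear automorphism. Since $|e^{\zeta a_j}|=e^{(\re\zeta) a_j}$, we have
\[
F(\zeta,\overline B)\cap\{\text{points with prescribed moduli}\}=A^{\re\zeta}\overline B.
\]
More precisely, $F(\zeta,w)=A^{\re\zeta}U_\zeta w$ with $U_\zeta=\mathrm{diag}(e^{i\,\im\zeta\, a_j})$ unitary, so $F(\zeta,B)=\cE(\re\zeta)$. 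It therefore suffices to prove that for each fixed $w\in B$ the holomorphic curve $\zeta\mapsto F(\zeta,w)$, $\zeta\in\overline S$, stays in $\Omega$. On the boundary lines $\re\zeta=0,1$ it lies in $U\!B=B\subset\Omega$ and $A U B=AB=\cE(1)\subset\Omega$, respectively. The strip is unbounded, but the curve is bounded because $|e^{\zeta a_j}|$ is bounded on $\overline S$. So we are in a maximum principle situation.

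The key step is to apply plurisubharmonicity. On a pseudoconvex domain, $u=-\log d_\Omega$ (boundary distance) is plurisubharmonic and continuous (\cite{H91}). Fix $w$ and consider the maximal connected set of $s$ for which the compact family
\[
K_s=\{F(\zeta,rw):0\le\re\zeta\le s,\ 0\le r\le 1\}
\]
lies in $\Omega$. Here $w$ ranges over $\overline{B_\rho}$ with $\rho<1$, so that we can work with compact sets. For small $s$ this holds by continuity and boundedness; uniform continuity comes from the explicit formula and the uniform bound on $|\im\zeta|$-dependence, since only unitary factors vary. Standard Kontinuitätssatz reasoning then applies: $u\circ F(\cdot,w)$ is subharmonic on $\{0<\re\zeta<s\}$. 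It is also bounded there, since the curve has bounded image; this requires first knowing the curve stays in $\Omega$ on that substrip. Phragmén–Lindelöf for bounded subharmonic functions on a strip then bounds it by its maximum on the two boundary lines. To get the bound on the line $\re\zeta=s$, we use the sets $F(\zeta,\overline{B_\rho})$ for $\re\zeta\le s$ and the openness/closedness argument. Concretely, $d_\Omega$ is bounded below on $K_s$ by $\min(d_\Omega$ on $\overline{B_\rho}$, $d_\Omega$ on $A\overline{B_\rho})$. This is an $s$-independent positive constant, which prevents escape, so the set of good $s$ is closed as well as open and equals $[0,1]$.

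I expect the main obstacle to be the bookkeeping of this continuity argument: we must ensure the subharmonic function is defined on the whole substrip before applying the maximum principle, and must handle the noncompactness of the strip. One way to avoid the noncompactness is to observe that $U_\zeta$ depends periodically on $\im\zeta$ only when the $a_j$ are commensurable. It is therefore cleaner simply to cite Phragmén–Lindelöf for bounded subharmonic functions on a strip, or to compose with a conformal map of the strip to the disc and note that the boundary values are bounded by the constant above. Letting $\rho\to1$ gives $\cE(t)\subset\Omega$ for all $t$.
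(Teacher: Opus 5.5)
Your ingredients are right: complexify $t$, observe that the lines $\re\zeta=0,1$ are mapped into $B$ and $AB$, and apply Phragm\'en--Lindel\"of to $-\log d_\Omega$ along the strip. This is the content of the paper's Proposition 2.2. The gap is in the continuity argument, which you run in the time variable $s$.

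Suppose you only know $K_{s'}\subset\Omega$ for $s'<s$. Then the maximum principle is available only on the substrip $\{0<\re\zeta<s'\}$. It bounds $-\log d_\Omega(F(\zeta,w))$ by its values on the lines $\re\zeta=0$ and $\re\zeta=s'$, and the values on $\re\zeta=s'$ are exactly the quantity that might blow up as $s'\uparrow s$. Your asserted $s$-independent bound $d_\Omega\ge\min\bigl(\min_{\overline{B_\rho}}d_\Omega,\min_{A\overline{B_\rho}}d_\Omega\bigr)$ on $K_s$ would need the maximum principle on the full strip $0<\re\zeta<1$. That presupposes $F(\zeta,w)\in\Omega$, with $d_\Omega$ bounded below, for all $\zeta\in\overline S$, which is what you are trying to prove. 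So closedness of the set of good $s$ is circular. In fact no argument of this shape can work: the hypothesis $\cE(1)\subset\Omega$ enters only through the line $\re\zeta=1$, a maximum principle on $\{0<\re\zeta<s\}$ never sees that line, and without that hypothesis the conclusion is false.

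The missing idea is to choose a deformation parameter for which every member of the family is a \emph{full} strip whose two boundary lines land in a fixed compact subset of $\Omega$. The paper moves the far endpoint. After shrinking so that $\overline{\cE(0)},\overline{\cE(1)}\subset\Omega$, it joins them by a path $\cP(\tau)$ of ellipsoids with closures in $\Omega$. It then runs the open--closed argument in $\tau$ over entire geodesics from $\cE(0)$ to $\cP(\tau)$, with Proposition 2.2 supplying openness.

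Your set-up allows a simpler repair, since you already have the radial parameter $r$ in $K_s$: make $r$ the continuity parameter instead of $s$.
\begin{itemize}
\item Fix $w\in\overline{B_\rho}$, and let $R$ be the set of $r\in[0,1]$ with $A^\zeta rw\in\Omega$ for all $\zeta\in\overline S$ and $\inf_{\zeta\in\overline S}d_\Omega(A^\zeta rw)>0$. Clearly $0\in R$.
\item For every $r$ the lines $\re\zeta=0,1$ are mapped into $\overline{B_\rho}$ and $A\overline{B_\rho}$. Hence for $r\in R$, Phragm\'en--Lindel\"of on the whole strip gives $d_\Omega(A^\zeta rw)\ge e^{-M}$, where $M=\max\{-\log d_\Omega(\xi):\xi\in\overline{B_\rho}\cup A\overline{B_\rho}\}$.
\item Also $|A^\zeta r'w-A^\zeta rw|\le C|r'-r|$ uniformly in $\zeta$.
\end{itemize}
This $r$-independent bound makes $R$ both open and closed, so $1\in R$. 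Letting $\rho\to1$ gives $\cE(t)\subset\Omega$.
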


In other words, the ellipsoids inscribed in $\Omega$ form a convex subset of $\bE_0$.

In \cite{L24} we dealt with another complex analog of John's problem, involving K\"ahler metrics. There is
an overlap between the two works: An $\Omega\subset\bC^n$ is balanced if 
$\lambda\Omega\subset\Omega$ whenever
$\lambda\in\bC$ has modulus $\le 1$. A balanced, strongly pseudoconvex $\Omega$
induces a K\"ahler metric on $\bP_{n-1}$, and for such $\Omega$
Theorems 1.1 and 1.2 can be deduced from \cite[Theorem 1.3]{L24}.

\section{Geodesics in the space of ellipsoids} 

In this section we will prove Theorem 1.3. For the moment,
consider a geodesic $\cE:[0,1]\to\bE_0$. Let $\cE(t)=E_{h_t}$ and $A$ a positive self adjoint operator on 
$(\bC^n,h_0)$ such that $A\cE(0)=\cE(1)$.

\begin{prop} 
$A$ is self adjoint on $(\bC^n,h_t)$, $0\le t\le 1$.
\end{prop}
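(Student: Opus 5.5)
The argument is a direct computation from the definition $h_t(z,w)=h_0(A^{-t}z,A^{-t}w)$. Here $A^{-t}$ is defined by functional calculus for the positive operator $A$ on $(\bC^n,h_0)$. Concretely, choose an $h_0$-orthonormal eigenbasis $e_1,\dots,e_n$ of $A$ with $Ae_j=\lambda_je_j$, $\lambda_j>0$, and set $A^se_j=\lambda_j^se_j$ for real $s$. In this description all the powers $A^s$ are diagonal in the same basis. Hence they commute with each other and with $A$, and each of them is self adjoint on $(\bC^n,h_0)$.

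We must check that $h_t(Az,w)=h_t(z,Aw)$ for all $z,w$. Since $A$ commutes with $A^{-t}$ and is $h_0$-self adjoint,
\[
h_t(Az,w)=h_0(A^{-t}Az,A^{-t}w)=h_0(AA^{-t}z,A^{-t}w)=h_0(A^{-t}z,AA^{-t}w)=h_0(A^{-t}z,A^{-t}Aw)=h_t(z,Aw).
\]
This is the claim.

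Alternatively, one can observe that $e_j/\lambda_j^{t}$ is an $h_t$-orthonormal basis of eigenvectors of $A$ with real eigenvalues. An operator with this property is self adjoint for $h_t$, and indeed positive.

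There is no real obstacle. The only point needing care is that the operator $A^t$ appearing in the geodesic is the one given by the $h_0$-spectral calculus, so that it commutes with $A$. This is how the paper defines it, via the unique positive self adjoint $A$ on $(\bC^n,h)$ with $h=h_0$.
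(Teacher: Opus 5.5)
Your computation is correct and is the paper's argument: you commute $A$ past $A^{-t}$ and use that $A$ is self adjoint on $(\bC^n,h_0)$, steps the paper compresses into a single line. There is a minor slip in your alternative remark: since $h_t(e_j,e_j)=\lambda_j^{-2t}$, the $h_t$-orthonormal eigenbasis is $\lambda_j^{t}e_j$, not $e_j/\lambda_j^{t}$. The conclusion is unaffected, because $h_t$-orthogonality of the eigenvectors already suffices.
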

\begin{proof}
Since $A^{-t}$ and $A$ commute,
\[
h_t(Az,w)=h_0(A^{-t}Az,A^{-t}w)=h_0(A^{-t}z,AA^{-t}w)=h_t(z,Aw),\qquad \text{q.e.d.}
\]
\end{proof}

Now let $\Omega\subset\bC^n$ be pseudoconvex.

\begin{prop} 
Suppose that $\cE(t)\subset\Omega$ for $t\in[0,1]$. If the closure $\overline{\cE(t)}$ is contained in $\Omega$
for $t=0,1$, then the same holds for all $t\in[0,1]$.
\end{prop}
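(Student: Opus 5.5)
The plan is to reduce the statement to a maximum principle for the plurisubharmonic function $u=-\log\delta_\Omega$, where $\delta_\Omega$ is the Euclidean distance to $\bC^n\setminus\Omega$. Pseudoconvexity of $\Omega$ means $u$ is continuous and plurisubharmonic on $\Omega$. If $\Omega=\bC^n$ there is nothing to prove, so I assume $\Omega\neq\bC^n$.

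First I would diagonalize. Since $A$ is positive self adjoint on $(\bC^n,h_0)$, choose an $h_0$-orthonormal basis of eigenvectors with eigenvalues $e^{\lambda_j}$, $\lambda_j\in\bR$. For complex $\tau$ define $A^\tau$ as the diagonal operator with entries $e^{\lambda_j\tau}$; this is holomorphic in $\tau$. Let $B=E_{h_0}=\cE(0)$, and for $0<r\le1$ let $B_r=rB$. If $\tau=t+is$, then $A^{is}$ is unitary for $h_0$, so $A^{is}B_r=B_r$ and its closure is preserved as well. Hence
\[
A^\tau \overline{B_r}=A^t\overline{B_r}=r\,\overline{\cE(t)}\subset \cE(t)\subset\Omega \qquad (r<1,\ 0\le t\le1).
\]
Here I use $\cE(t)=A^tB$ and the hypothesis $\cE(t)\subset\Omega$.

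Next, fix $r<1$ and define on a neighborhood of the closed strip $\{0\le\re\tau\le1\}$ the function
\[
\Psi_r(\tau)=\max_{w\in\overline{B_r}} u(A^\tau w).
\]
The domain extends to a neighborhood because $r\overline{\cE(t)}$ is compact in $\Omega$ and depends continuously on $t$, so $r\overline{\cE(t')}\subset\Omega$ for $t'$ slightly outside $[0,1]$. Each $\tau\mapsto u(A^\tau w)$ is subharmonic, being the composition of a plurisubharmonic function with a holomorphic map. The maximum over a compact set of a jointly continuous family is continuous, so $\Psi_r$ is continuous and subharmonic. By the unitary invariance above, $\Psi_r(\tau)=\psi_r(\re\tau)$ depends only on $\re\tau$. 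A subharmonic function of $\re\tau$ alone is convex, so
\[
\psi_r(t)\le\max\bigl(\psi_r(0),\psi_r(1)\bigr)\le M:=\max\Bigl(\max_{\overline{\cE(0)}}u,\ \max_{\overline{\cE(1)}}u\Bigr).
\]
The constant $M$ is finite precisely because $\overline{\cE(0)},\overline{\cE(1)}$ are compact subsets of $\Omega$. This is where the hypothesis on the endpoints enters.

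Finally, letting $r\to1$ gives $\delta_\Omega\ge e^{-M}$ on $\bigcup_{r<1}r\overline{\cE(t)}=\cE(t)$. Since $\delta_\Omega$ is continuous on $\bC^n$ (defined as the distance to the complement), the bound passes to $\overline{\cE(t)}$, so $\overline{\cE(t)}\subset\Omega$. The step I expect to need the most care is the subharmonicity and convexity of $\psi_r$. Specifically, I need to make sure that the complexified family $A^\tau w$ stays inside $\Omega$ on a full neighborhood of the strip, which is what the shrinking by $r<1$ is for. I also need that taking the max over $w$ preserves subharmonicity, which holds because the family is continuous and the parameter set is compact. Proposition 2.1 is only used implicitly, in that $A^{is}$ being $h_0$-unitary is what makes $\Psi_r$ independent of $\im\tau$.
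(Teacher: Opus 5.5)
Your argument is correct and is essentially the paper's proof: you complexify $t$ to the strip $0\le\re\tau\le1$, use that $A^{is}$ is $h_0$-unitary so that $A^\tau$ maps shrunken copies of $\cE(0)$ into $\cE(\re\tau)\subset\Omega$, and bound a plurisubharmonic function along the strip by its values on $\overline{\cE(0)}\cup\overline{\cE(1)}$. The differences are only technical: you use $-\log\delta_\Omega$ instead of a plurisubharmonic exhaustion, so you conclude via continuity of $\delta_\Omega$ rather than compactness of sublevel sets; and by maximizing over $w\in\overline{B_r}$ you get a function of $\re\tau$ alone, so convexity replaces the maximum principle that the paper applies to the bounded subharmonic function $u(A^sz)$ on the unbounded strip.
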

\begin{proof}
Let $S$ denote the strip $\{s\in\bC:0<\re s<1\}$. Fix a continuous plurisubharmonic 
exhaustion function\footnote{That is, $\{\zeta\in\Omega:u(\zeta)\le c\}$ is compact for all $c\in\bR$.}
$u:\Omega\to\bR$ and let
\[
M=\sup\{u(\zeta):\zeta\in\cE(0)\cup\cE(1)\}<\infty.
\]
Choose an arbitrary $z\in\cE(0)$ and put $\lambda=\sqrt{h_0(z,z)}<1$. Since $A^{i\tau}$ is unitary on
$(\bC^n,h_0)$ when $\tau\in\bR$,
\[
A^sz=A^{\re s}A^{i\im s}z\in A^{\re s}\big(\lambda\overline{\cE(0)}\big)\subset\Omega,\qquad s\in\overline S.
\]
The range of the map $[0,1]\times\lambda\overline{\cE(0)}\ni(t,w)\mapsto A^tw\in\Omega$ being compact,
$v(s)=u(A^sz)$ defines a bounded continuous function of $s\in\overline S$, subharmonic on $S$. 
By the maximum
principle, for $0\le t\le 1$
\[
v(t)\le\sup\{v(s):s\in\partial S\}\le\sup\{u(\zeta):\zeta\in\cE(0)\cup\cE(1)\}= M,
\]
i.e., $u(A^tz)\le M$. Varying $z\in\cE(0)$ we obtain $\cE(t)=A^t\cE(0)\subset\{\zeta\in\Omega:u(\zeta)\le M\}$. This
latter being a compact subset of $\Omega$, $\overline{\cE(t)}\subset\Omega$ indeed follows.
\end{proof}
\begin{proof}[Proof of Theorem 1.3]
Now all we are given is that $\cE:[0,1]\to\bE_0$ is a geodesic with endpoints inscribed in $\Omega$. Assume first,
nonetheless, that even $\overline{\cE(0)},\overline{\cE(1)}\subset\Omega$, and prove $\cE(t)\subset\Omega$ 
for all $t$ in this case. Connect $\cE(0),\cE(1)$ with a continuous path $\cP:[0,1]\to\bE_0$ such that 
$\overline{\cP(\tau)}\subset\Omega$ for all $\tau\in[0,1]$, and for each $\tau\in[0,1]$ let 
$\cF(\tau,\cdot):[0,1]\to\bE_0$ denote the geodesic between $\cE(0)=\cP(0)$ and $\cP(\tau)$. In particular,
$\cF(0,\cdot)\equiv\cE(0)$ and $\cF(1,\cdot)=\cE$. Note that $\cF:[0,1]\times[0,1]\to\bE_0$ is continuous. Let
\[
\Theta=\{\tau\in[0,1]:\cF(\tau,t)\subset\Omega\text{ for all } t\in[0,1]\}.
\]
Clearly $\Theta$ is closed and $0\in \Theta$. But $\Theta$ is open in $[0,1]$ as well, for if $\tau\in \Theta$ 
then Proposition 2.2
implies that each closure $\overline{\cF(\tau,t)}\subset\Omega$ is at positive distance to $\partial\Omega$,
$0\le t\le 1$; hence the same holds for $\overline{\cF(\tau',t)}$ if $\tau'\in[0,1]$ is close to $\tau$. Therefore 
$\Theta=[0,1]$ and $\cE(t)=\cF(1,t)\subset\Omega$.

This takes care of the case when $\overline{\cE(0)},\overline{\cE(1)}\subset\Omega$. The general case is
obtained by using what we have just proved for $\lambda\cE(t)$ instead of $\cE(t)$, 
$0<\lambda<1$, since $\cE(t)=\bigcup_{0<\lambda<1}\lambda\cE(t)$.

\end{proof}

\section{The proof of Theorem 1.1} 

That among hermitian ellipsoids inscribed in $\Omega$ there is a maximal one is proved no differently than 
in \cite{J48}. If $D\in\bE_0$ is fixed, any $E\in\bE_0$ is of form $AD$ with $A$ 
a linear operator on $\bC^n$; the condition
that $AD\subset\Omega$ cuts out a compact set of operators $A$, over which $\vo(AD)=|\det A|\vo D$
attains its maximum at an invertible $A_0$; then $A_0D\in\bE_0$ is a maximal ellipsoid sought.

Next suppose that $E_h\in\bE_0$ is maximal in $\Omega$. Consider $L(\bC^n)$, the space of
$\bC$--linear operators on $\bC^n$, as a  real vector space, its dual $L(\bC^n)'$, and linear forms 
$l_z\in L(\bC^n)'$ ($z\in\bC^n$) and $l\in L(\bC^n)'$,
\[
l_z(T)=\re h(Tz,z),\qquad l(T)=\re\tr T/n,\quad\qquad T\in L(\bC^n).
\]
We claim that $l$ is in the convex hull of the set
\begin{equation} 
\{l_z:z\in\partial\Omega\cap\partial E_h\}\subset L(\bC^n)';
\end{equation}
in other words, the value that any linear form on $L(\bC^n)'$ takes at $l$ is dominated by the maximum of the
form on the
set in (3.1). As linear forms on $L(\bC^n)'$ are evaluations at some $T\in L(\bC^n)$, we need to show
\begin{equation} 
\re \tr T/n\le \max\{\re h(Tz,z): z\in\partial\Omega\cap\partial E_h\}\qquad\text{for all} \quad T\in L(\bC^n).
\end{equation}

Given $T$, denote the maximum in (3.2) by $M$. Let $\lambda>M$ and $S=\lambda\Id-T$. If $z\in\partial\Omega\cap\partial E_h$ then
$\re h(Sz,z)>0$. Since $\partial\Omega\cap\partial E_h$ is compact, there are a $\delta>0$ and a neighborhood
$N\subset\bC^n$ of $\partial\Omega\cap\partial E_h$ such that $\re h(Sz,z)>\delta$ when 
$z\in N$. For any $t\in\bR$
let
\[
\cE(t)=e^{-tS}E_h=\{z\in\bC^n: h(e^{tS}z,e^{tS}z)<1\}\in\bE_0.
\]
When $t>0$ is small and $z\in N$
\[
h(e^{tS}z,e^{tS}z)=h(z,z)+2t\,\re h(Sz,z)+O(t^2)>h(z,z),
\]
so $\overline{\cE(t)}\cap N\subset E_h\subset\Omega$. But 
$\overline {\cE(0)}\setminus N=\overline{E_h}\setminus N$ is at positive distance to
$\bC^n\setminus\Omega$, hence $\overline {\cE(t)}\setminus N\subset \Omega$; in sum, 
$\cE(t)\subset\Omega$ for small $t>0$. Since $E_h$ was maximal,
\[
\vo E_h\ge\vo \cE(t)=|\det e^{-tS}|^2\vo E_h=e^{-2t\re\tr S}\vo E_h,
\]  
which means $\re\tr S\ge 0$ and $\re\tr T/n\le\lambda$. This being true for all  $\lambda>M$, (3.2) follows.

We conclude that $l$ is indeed in the convex hull of the set in (3.1), i.e., there is a Borel measure
$\nu$ on $\partial\Omega\cap\partial E_h$, of total mass $1$, such that $\int l_z\,d\nu(z)=l$. ($\nu$ can be
chosen to have finite support.) Thus with $\mu=n\nu$
\[
\int_{\partial\Omega\cap\partial E_h}\re h(Tz,z)\,d\mu(z)=nl(T)=\re\tr T,\qquad T\in L(\bC^n).
\]
This implies the claimed  necessary condition
\begin{equation} 
\int_{\partial\Omega\cap\partial E_h} h(Tz,z)\,d\mu(z)= \tr T
\end{equation}
because a $\bC$--linear form is uniquely determined by its real part.---The reader will notice that the 
proof of necessity is also quite close to John's proof in \cite{J48}.

To prove that (3.3) is sufficient for maximality, suppose that $E_h\in\bE_0$ inscribed in $\Omega$ and a
Borel measure $\mu$ on $\partial\Omega\cap\partial E_h$ satisfy (3.3) for all linear operators $T$ on
$\bC^n$. We need to show that any $E_k\in\bE_0$ inscribed in $\Omega$ has volume $\le\vo E_h$.

The geodesic $\cE:[0,1]\to\bE_0$ between $E_h,E_k$ is of form $\cE(t)=e^{tT}E_h$, where $T$ is a
self adjoint operator on $(\bC^n,h)$. By Theorem 1.3
\[
\cE(t)=\{z\in\bC^n:h(e^{-tT}z,e^{-tT}z)<1\}\subset\Omega,\qquad 0\le t\le 1.
\]
If $z\in\partial E_h$ and $h(Tz,z)>0$, then 
\[
h(e^{-tT}z,e^{-tT}z)=h(z,z)-2t\re h(Tz,z)+O(t^2)<1
\]
for small $t>0$. Therefore such $z$ is in $\cE(t)\subset\Omega$. Put it differently, if $z\in \partial\Omega\cap\partial E_h$
then $h(Tz,z)\le 0$. By (3.3) $\tr T\le 0$ and
\[
\vo E_k=e^{2\tr T}\vo E_h\le\vo E_h
\]
follow.

\section{Uniqueness of maximal ellipsoids} 

We split Theorem 1.2 in two and first prove
\begin{prop} 
Suppose $\Omega\subset\bC^n$ is a bounded pseudoconvex domain, $0\in\Omega$. If $\partial\Omega$
contains no holomorphic disc, then $\Omega$ admits a unique maximal ellipsoid $E\in\bE_0$.
 \end{prop}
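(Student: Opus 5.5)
Existence is already settled by Theorem 1.1, so we only need uniqueness. Suppose $E_h\ne E_k$ are both maximal ellipsoids in $\Omega$, so $\vo E_h=\vo E_k$. Write the geodesic between them as $\cE(t)=A^tE_h=e^{tT}E_h$, where $A=e^T$ and $T$ is self adjoint on $(\bC^n,h)$ and not zero. By Theorem 1.3 every $\cE(t)$, $0\le t\le1$, is inscribed in $\Omega$. Since $\vo\cE(t)=e^{2t\tr T}\vo E_h$ and the endpoint volumes agree, $\tr T=0$. So all $\cE(t)$ have the same volume, and each $\cE(t)$ is maximal. Take $t=1/2$, $E_g=\cE(1/2)$. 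By the necessity part of Theorem 1.1 there is a measure $\mu$ on $\partial\Omega\cap\partial E_g$ with $\int g(Sz,z)\,d\mu=\tr S$ for all $S$. In particular $\mu\ne0$, so we can pick points $z\in\partial\Omega\cap\partial E_g$. We may choose such a $z$ with $g(Tz,z)\ne0$ unless $g(Tz,z)=0$ on the whole support. The latter case will be handled the same way below.

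\textbf{Local contact along the geodesic.} Fix $z\in\partial\Omega\cap\partial E_g$. Since $A$ is self adjoint for every $h_t$ (Proposition 2.1), the function $\phi(t)=g(A^{-(t-1/2)}z,A^{-(t-1/2)}z)$ is a sum $\sum_j c_j\alpha_j^{-2(t-1/2)}$ with $c_j\ge0$, $\alpha_j>0$, taken over the eigendecomposition of $A$. Hence $\phi$ is convex in $t$, $\phi(1/2)=1$, and $z\in\cE(t)$ exactly when $\phi(t)<1$. Because $z\notin\Omega$ but $\cE(t)\subset\Omega$ for all $t$, we get $\phi\ge1$ on $[0,1]$. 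This forces $\phi'(1/2)=0$, i.e.\ $\re g(Tz,z)=0$, which is automatic for self adjoint $T$ only after the convexity argument. So $\phi$ has a minimum equal to $1$ at the interior point $1/2$.

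\textbf{The holomorphic disc.} Now complexify, as in the proof of Proposition 2.2. Consider $s\mapsto A^{-(s-1/2)}z$ for $s$ in a small disc around $1/2$. Its $g$-norm squared is $\sum_j c_j\alpha_j^{-2(\re s-1/2)}$, since $A^{i\tau}$ is $g$-unitary. I would show this map lands in $\overline\Omega\setminus\Omega=\partial\Omega$, at least for $s$ with $\re s=1/2$, i.e.\ along $\tau\mapsto A^{-i\tau}z\in\partial E_g$. Points $A^{-i\tau}z$ lie on $\partial E_g\subset\overline\Omega$. To see they are not in $\Omega$, use a plurisubharmonic argument. Pick a continuous psh exhaustion $u$ and consider $v(s)=u(A^{-(s-1/2)}w)$ for $w$ near $z$ inside $E_g$, subharmonic in $s$. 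If $A^{-i\tau_0}z\in\Omega$ for some $\tau_0$, then a neighborhood of it is in $\Omega$. Transporting this by the one-parameter group shows $A^{-i\tau}$ maps a neighborhood of $z$ into $\Omega$ for $\tau$ near $\tau_0$, and I would aim to derive a contradiction with $z\in\partial\Omega$ via the maximum principle.

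\textbf{The main obstacle.} The expected cleaner route is different. The full two-real-parameter family $\{A^{-(s-1/2)}z\}$ with $0\le\re s\le1$ has $g$-norm $\ge1$, and it lies in $\overline{\bigcup_t\cE(t)}\cup(\bC^n\setminus\Omega)$. Combined with pseudoconvexity through a Kontinuit\"atssatz-type argument, this should show the analytic curve $s\mapsto A^{-(s-1/2)}z$ stays in $\partial\Omega$ near $s=1/2$. That curve is a nonconstant holomorphic disc unless $z$ is a common eigenvector of $A$ for a single eigenvalue, in which case it is constant. The main difficulty is exactly this: we must show the curve is nonconstant, and we must justify that it lies in $\partial\Omega$. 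For nonconstancy I would use the trace condition. If every $z$ in the support of $\mu$ lies in a single eigenspace of $A$, then taking $S=T$ gives $\int g(Tz,z)\,d\mu=\tr T=0$. Here $g(Tz,z)=\log\alpha_j$ depends on which eigenspace $z$ lies in. Also taking $S$ to be the $g$-orthogonal projections onto the eigenspaces gives $\mu$-masses equal to the dimensions. Together these produce a contradiction unless $T=0$.
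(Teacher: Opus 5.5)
Your reduction is correct: geodesic, Theorem 1.3, $\tr T=0$, every $\cE(t)$ maximal, and the measure $\mu$ at $E_g=\cE(1/2)$. The contact computation correctly gives $g(Tz,z)=0$ on $\partial\Omega\cap\partial E_g$. But that first-order identity is compatible with $T\ne0$. What the argument needs is $Tz=0$ on the contact set, and for that you must prove that the curve $s\mapsto A^{-(s-1/2)}z$ lies in $\partial\Omega$ for $s$ near $1/2$. You leave this step unproved (``should show'', ``I would aim to''). It is the heart of the proof, the one place where pseudoconvexity enters beyond Theorem 1.3.

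Your first attempt cannot supply it. Points with $\re s=1/2$ form only a real curve, not a disc. Moreover, an arbitrary psh exhaustion has no upper bound along the nearby curves $s\mapsto rA^{-s}z$ that matches its value at $rz$, so the maximum principle yields nothing. Your guess of a Kontinuit\"atssatz-type argument points the right way, but the construction is missing. The paper's Proposition 4.2 supplies it through radial discs:
\begin{itemize}
\item Put $\Phi(r,s)=rA^{-s}z$ and $P=\Phi^{-1}\Omega\subset\bC^2$, which is pseudoconvex.
\item For $|r|<1$ and $|\re s|<1/2$ one has $\Phi(r,s)\in\cE(1/2-\re s)\subset\Omega$, so this region lies in $P$.
\item Since $\Phi(1,0)=z\notin\Omega$, the point $(1,0)$ lies in $\partial P$.
\item Let $d(r,s)$ be the distance from $(r,s)$ to $\bC^2\setminus P$ in the direction $(1,0)$. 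Then $-\log d$ is psh, $d(r,s)\ge 1-|r|$ on the region, and $d(r,0)\le 1-r$ for $0<r<1$.
\item The maximum principle for $-\log d(r,\cdot)$ forces $d(r,s)=1-r$ for all $|\re s|<1/2$.
\item Letting $r\to1$ gives $(1,s)\notin P$, that is, $A^{-s}z\in\partial\Omega$.
\end{itemize}
The directional distance is essential. With the Euclidean distance to $\partial\Omega$, the upper and lower bounds carry different constants and the maximum principle gives nothing.

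Your nonconstancy discussion also needs fixing. The curve $s\mapsto e^{-(s-1/2)T}z$ is constant iff $Tz=0$, not iff $z$ is an eigenvector of $A$. An eigenvector with eigenvalue $\alpha\ne1$ gives the nonconstant disc $s\mapsto\alpha^{-(s-1/2)}z$ in a complex line.

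The trace computation you sketch is not a contradiction by itself. If the support points lie in eigenspaces $V_j$, the projections give $\mu(V_j)=\dim V_j$. Then $\int g(Tz,z)\,d\mu=\sum_j\dim V_j\log\alpha_j=\tr T$ holds identically. It can be rescued with your contact identity $g(Tz,z)=0$.

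The clean ending is the paper's. The disc argument plus the no-disc hypothesis give $Tz=0$ for every $z\in\partial\Omega\cap\partial E_g$. Now take $S=T^2$ in the trace identity: $\tr T^2=\int g(Tz,Tz)\,d\mu=0$. Hence the self adjoint $T$ vanishes and $E_h=E_k$.
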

 The following auxiliary result will be needed:
 \begin{prop} 
 Suppose $P\subset\bC^2$ is a pseudoconvex open set and $U\subset\bC^2$ is a neighborhood of $(1,0)$
 such that 
 \begin{equation} 
 \{(r,s)\in U:|r|<1\}\subset P.
 \end{equation}
 If $(1,0)\in\partial P$ then all $(1,s)\in\{1\}\times\bC$ sufficiently close to $(1,0)$ are in $\partial P$.
 \end{prop}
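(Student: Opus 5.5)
Suppose, to reach a contradiction, that for points $s_0$ arbitrarily close to $0$ we have $(1,s_0)\notin\partial P$. Since every $(1,s)$ with $(1,s)\in U$ is a limit of points $(r,s)\in U$ with $|r|<1$, and these lie in $P$ by (4.1), all such $(1,s)$ lie in $\overline P$. Hence $(1,s)\notin\partial P$ means $(1,s)\in P$. So it suffices to show that $(1,s_0)\in P$ with $s_0$ small forces $(1,0)\in P$. I plan to use the Kontinuit\"atssatz, i.e.\ the disc form of pseudoconvexity, which is equivalent to plurisubharmonicity of $-\log d_P$ (\cite{H91}). Fix $\varepsilon>0$ with $\{|r-1|<\varepsilon,\ |s|<\varepsilon\}\subset U$, take $s_0\neq0$ with $|s_0|<\varepsilon/2$ and $(1,s_0)\in P$, and choose $\sigma>0$ with the bidisc $V=\{|r-1|<\sigma,\ |s-s_0|<\sigma\}\subset P$.

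The aim is a continuous family of closed analytic discs $f_t:\overline\Delta\to\bC^2$, $0\le t\le 1$, with three properties. First, $f_0(\overline\Delta)\subset P$. Second, the boundaries $f_t(\partial\Delta)$ all stay in one fixed compact set $K\subset P$. Third, $f_1(0)=(1,0)$. The Kontinuit\"atssatz then gives $f_t(\overline\Delta)\subset P$ for all $t$, hence $(1,0)\in P$, a contradiction. Equivalently, apply the maximum principle to the subharmonic functions $-\log d_P\circ f_t$ and run the open--closed argument in $t$ used in the proof of Theorem 1.3.

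I will take discs of the form $f_t(\xi)=\bigl(1-\delta\,p_t(\xi),\ s_0\xi\bigr)$, where the $p_t$ are holomorphic on a neighborhood of $\overline\Delta$ and $\delta>0$ is small. Then $|s_0\xi|<\varepsilon$, so the $s$-coordinate stays within the region governed by (4.1). The conditions on the $p_t$ are:
\begin{itemize}
\item[(a)] on an arc $\{e^{i\theta}:|\theta|<\theta_0\}$, with $\theta_0$ so small that $|s_0e^{i\theta}-s_0|<\sigma$, the $r$-coordinate satisfies $|\delta p_t|<\sigma$, so that boundary point lies in $V$;
\item[(b)] on the complementary arc, $\re p_t\ge c>0$ and $\delta$ is so small that $|1-\delta p_t|\le 1-c\delta/2$, so those boundary points lie in a compact subset of $\{|r|<1\}\times\{|s|\le|s_0|\}\cap U\subset P$;
\item[(c)] $p_1(0)=0$, while $p_0$ is such that the whole disc $f_0(\overline\Delta)$ lies in $P$; for instance $\re p_0>0$ on all of $\overline\Delta$.
\end{itemize}
Since $\re p_t$ is harmonic, (b) and (c) at $t=1$ force $\re p_1$ to be very negative on the short arc $|\theta|<\theta_0$ while staying $>0$ elsewhere, with mean value $0$. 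Meanwhile (a) only controls $|\delta p_1|$ there, and since $\delta$ is free, (a) can be met simply by taking $\delta$ small. This is consistent. A Poisson-integral construction supplies $p_1$: let $\re p_1$ equal a large negative constant $-A$ on the short arc and $1$ elsewhere, with $A$ tuned so the mean is $0$, smoothed to keep it continuous, and complete it with the harmonic conjugate normalized by $\im p_1(0)=0$. Then take $p_t=p_1+(1-t)$, so $p_0=p_1+1$.

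The main obstacle is reconciling (c) at $t=0$ with the rest. The choice $p_0=p_1+1$ still has $\re p_0$ very negative on the short arc. So I must check that the whole disc $f_0(\overline\Delta)$ lies in $P$, not just its boundary, or else choose a different starting disc. My fallback is to start instead from a disc entirely inside $\{|r|<1\}$, namely $p\equiv$ a positive constant, and homotope to $p_1$ through functions satisfying (a) and (b). Because such functions form a convex set, the straight-line homotopy $(1-t)c_0+tp_1$ should preserve both (a) and (b), and continuity of $t\mapsto f_t$ is then clear. The remaining thing to verify carefully is that the compact set $K$ containing all boundaries can be chosen independently of $t$. This holds because (a) and (b) hold uniformly in $t$ along this convex homotopy.
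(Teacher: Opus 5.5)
Your argument is correct, but it takes a different route from the paper's.

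\textbf{The paper's route.} The paper argues directly rather than by contradiction. With $d(r,s)$ the distance from $(r,s)$ to $\partial P$ in the direction $(1,0)$, it quotes the plurisubharmonicity of $-\log d$ on $P$. For fixed $0<r<1$ near $1$, the subharmonic function $s\mapsto -\log d(r,s)$ is bounded above by $-\log(1-r)$ near $s=0$ because of (4.1). It attains this bound at $s=0$ because $(1,0)\notin P$. The maximum principle then gives $d(r,s)=1-r$ for small $s$, and letting $r\to1$ yields $(1,s)\notin P$.

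\textbf{Your route.} You assume $(1,s_0)\in P$ and run a Hartogs-figure/Kontinuit\"atssatz argument with the discs $\xi\mapsto(1-\delta p_t(\xi),s_0\xi)$. Uniformly in $t$, their boundaries lie either in $\{|r|<1\}\cap U$ or in a closed bidisc around $(1,s_0)$ slightly smaller than $V$, while the last disc is centered at $(1,0)$.

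\textbf{Which version works.} The working version is your fallback: the straight-line homotopy $p_t=(1-t)c_0+tp_1$ from a positive constant to the mean-zero Poisson-integral function $p_1$. Choose $\delta$ last, after $\theta_0$, $p_1$ and $c_0$, so that $\delta\sup_t\|p_t\|_\infty$ is small. Then (a) and (b) persist along the whole segment, and $f_0(\overline\Delta)\subset\{|r|<1\}\cap U\subset P$. The first attempt $p_t=p_1+(1-t)$ should simply be dropped from the write-up.

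\textbf{What each route buys.} The paper's proof is a few lines once the plurisubharmonicity of the directional distance is granted. Yours uses only the disc form of pseudoconvexity: a plurisubharmonic exhaustion and the maximum principle along a continuous family of discs. In effect it re-derives by hand, in this special configuration, the fact the paper cites from Demailly, at the cost of an explicit construction.
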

 \begin{proof}
Define $d:\overline P\to[0,\infty]$ by
\[
d(r,s)=\inf\{|\rho|:\rho\in\bC, (r+\rho,s)\notin P\},\qquad (r,s)\in\overline P,
\]
the distance from $(r,s)$ to $\partial P$ in the complex direction $(1,0)$. Then $-\log d$ is plurisubharmonic 
on $P$, see e.g. \cite[Chapter I, (7.2) Theorem]{D12}. (4.1) implies that $d(r,s)\ge 1-|r|$ if $(r,s)$ is sufficiently
close to $(1,0)$; furthermore, $d(r,0)\le|1-r|$, since $(1,0)\notin P$. The maximum principle applied to the
subharmonic function $-\log d(r,\cdot)$, $0<r<1$, gives $-\log d(r,s)=-\log(1-r)$ when $(r,s)$ is close to $(1,0)$. Letting
$r\to 1$ we obtain $d(1,s)=0$, i.e., $(1,s)\notin P$ for small $s$. But (4.1) implies $(1,s)\in\overline P$, hence
$(1,s)\in\partial P$.
\end{proof}
\begin{proof}[Proof of Proposition 4.1]
Suppose $E,E'$ are maximal ellipsoids in $\Omega$ and $\cE:[0,1]\to\bE_0$ is the geodesic between the two.
By Theorem 1.3 each $\cE(t)$ is inscribed in $\Omega$. Set $\cE(1/2)=E_h$. Proposition 2.1 implies that
$\cE(t)=e^{(t-1/2)A}E_h$ with some self adjoint operator $A$ on $(\bC^n,h)$. Now $\tr A=0$ because
$\vo\cE(0)=\vo\cE(1)=e^{2\tr A}\vo\cE(0)$; hence all $\cE(t)$ are of the same volume, therefore maximal. In 
particular, $E_h$ is maximal.

Next we show that $Az=0$ if $z\in\partial\Omega\cap\partial E_h$. With such $z$ let
\[
\Phi(r,s)=re^{sA}z\in\bC^n,\qquad (r,s)\in\bC^2;
\]
$P=\Phi^{-1}\Omega$, and $U=\{(r,s)\in\bC^2:|\re s|<1/2\}$. If $|r|<1$ and $|\re s|<1/2$, then
\[
h\big(e^{-(\re s)A}\Phi(r,s), e^{-(\re s)A}\Phi(r,s)\big)=|r|^2h(z,z)<1,
\]
and so $\Phi(r,s)\in\cE(1/2+\re s)\subset\Omega$. Thus $(r,s)\in P$ and (4.1) holds. In particular, 
$(1,0)\in\overline P$.
But $\Phi(1,0)=z\notin\Omega$, whence $(1,0)\notin P$, and so $(1,0)\in\partial P$. The assumptions of 
Proposition 4.2 satisfied, we conclude that $(1,s)\in\partial P$ and $e^{sA}z=\Phi(1,s)\in\partial\Omega$ for 
small $s\in\bC$. Since, however, $\partial\Omega$ contains no holomorphic disc, $e^{sA}z$ must be 
independent of $s$, and $Az=\partial e^{sA}z/\partial s|_{s=0}=0$.

To finish the proof we recall that $E_h$ is maximal. Theorem 1.1 therefore provides a Borel measure $\mu$
on $\partial\Omega\cap\partial E_h$ such that $\int h(Tz,z)\,d\mu(z)=\tr T$ for all linear operators $T$ on
$\bC^n$. In particular,
\[
\tr A^*A=\int_{\partial\Omega\cap\partial E_h}h(A^*Az,z)\,d\mu(z)=0.
\]
Thus $A=0$ and the two maximal ellipsoids $E,E'$ coincide.
\end{proof}

Now we turn to the remaining half of Theorem 1.2:
\begin{prop} 
There is a bounded pseudoconvex domain $\Omega\subset\bC^2$ that admits infinitely many maximal 
ellipsoids $E\in\bE_0$.
\end{prop}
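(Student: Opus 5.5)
We construct an example in which the boundary contains holomorphic discs, guided by the proof of Proposition 4.1. That proof fails exactly when $\partial\Omega$ contains discs of the form $s\mapsto e^{sA}z$ through the contact points. The natural candidate is a domain invariant under a one-parameter group $e^{tA}$ with $A$ self adjoint and $\tr A=0$. Concretely, take $A=\mathrm{diag}(1,-1)$ on $\bC^2$ and look for a bounded pseudoconvex $\Omega$ with $0\in\Omega$ that contains the ellipsoids
\[
E_t=\{(z_1,z_2): e^{-2t}|z_1|^2+e^{2t}|z_2|^2<1\}, \qquad t\in[-1,1].
\]
Each of these has the same volume as the unit ball $B$. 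If the unit ball $B=E_0$ is maximal in $\Omega$, then every $E_t$ with $t\in[-1,1]$ is maximal too, giving infinitely many maximal ellipsoids.

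The plan is to set
\[
\Omega=\bigcup_{|t|<1}E_t \quad(\text{or the closed-parameter version, then take the interior}).
\]
In polar terms, $\Omega$ is the Reinhardt domain of points with $(|z_1|,|z_2|)=(a,b)$ such that $e^{-2t}a^2+e^{2t}b^2<1$ for some $|t|<1$. In logarithmic coordinates $x=\log a$, $y=\log b$, the condition for a fixed $t$ is a convex condition. A union of convex sets is not convex in general, however, so this step needs care.

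Pseudoconvexity of a complete Reinhardt domain is equivalent to logarithmic convexity, so the set $\{(\log|z_1|,\log|z_2|)\}$ must be checked for convexity. Minimizing $e^{-2t}a^2+e^{2t}b^2$ over $t\in\bR$ gives $2ab$. Restricting $t$ to $[-1,1]$, the region is cut out by:
\begin{itemize}
\item $ab<1/2$ in the middle range $e^{-2}\le b/a\le e^{2}$, which is a half-plane $x+y<-\log 2$;
\item a single ellipse condition at the two ends.
\end{itemize}
The log-image of $\{e^{\mp2}a^2+e^{\pm2}b^2<1\}$ is convex, being the log-image of a convex complete Reinhardt domain. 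I expect the union to be log-convex because it equals the intersection of two such ellipse log-images with the half-plane. This should follow from the tangency structure of the family of curves $e^{-2t}a^2+e^{2t}b^2=1$, whose envelope is exactly $ab=1/2$. Verifying this geometric identity is routine calculus. Boundedness is clear, since $|z_1|<e$ and $|z_2|<e$.

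The main step is showing that $B$ is maximal in $\Omega$, using the sufficiency part of Theorem 1.1. I need a measure $\mu$ on $\partial\Omega\cap\partial B$ with $\int h(Tz,z)\,d\mu=\tr T$, which for the standard form is $\int z\bar z^{T}\,d\mu=I$. The sphere meets $\partial\Omega$ along the torus $|z_1|=|z_2|=1/\sqrt2$, because there $ab=1/2$. Normalized Haar measure on this torus, multiplied by $2$, gives
\[
\int z_j\bar z_k\,d\mu=\delta_{jk},
\]
since the off-diagonal terms vanish by rotation invariance and $|z_j|^2=1/2$. It remains to check that $B\subset\Omega$ (clear, as $t=0$) and that these torus points really lie on $\partial\Omega$. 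The latter holds because $ab<1/2$ is a necessary condition for membership in $\Omega$. Theorem 1.1 then shows $B$ is maximal, and hence so is each $E_t$ with $|t|<1$.

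The expected obstacle is the log-convexity check, together with making sure $\Omega$ is open and that the union is cut out by the stated inequalities. A fallback is to replace $\Omega$ by the intersection of the half-space-type domain $\{|z_1z_2|<1/2\}$ with a large ball. This intersection is pseudoconvex, since it is an intersection of pseudoconvex domains. It contains $E_t$ for $|t|$ small enough that $E_t$ lies in the ball, because $2|z_1||z_2|\le e^{-2t}|z_1|^2+e^{2t}|z_2|^2<1$ on $E_t$. The same torus measure then certifies the maximality of $B$. This fallback is simpler, and I would actually use it.
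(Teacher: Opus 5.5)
Your fallback is correct and is essentially the paper's proof. The paper takes $\Omega=\{|xy|<1,\ |x|,|y|<3\}$, a rescaled version of your domain, and certifies each ellipsoid $\{(p^2/2)|x|^2+(p^{-2}/2)|y|^2<1\}$, $1/2<p<2$, directly through the rotation-invariant measure of mass $2$ on the torus $\{|x|=p^{-1},\ |y|=p\}$; you certify only the unit ball and get the other $E_t$ from equality of volumes, which works just as well.

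You are right to drop the first plan, because its stated justification is false. Each end ellipse region already lies in the half-plane $x+y<-\log 2$, since those ellipses are tangent to $ab=1/2$. So the intersection you describe is just the log-image of $E_1\cap E_{-1}$, which does not even contain the unit ball. The union is in fact log-convex, but because its boundary is a $C^1$ concatenation of concave graphs, not for the reason you gave.
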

\begin{proof}
We take
\[
\Omega=\{(x,y)\in\bC^2:|xy|<1\text{ and } |x|,|y|<3\},
\]
and show that with any real number $1/2<p<2$
\[
E=\{(x,y)\in\bC^2:(p^2/2)|x|^2+(p^{-2}/2)|y|^2<1\}\in\bE_0
\]
is a maximal ellipsoid in $\Omega$. First, if $(x,y)\in E$, then
\[
 |x|^2/8<(p^2/2)|x|^2<1,\qquad |y|^2/8<(p^{-2}/2)|y|^2<1,
 \]
and by the inequality of arithmetic and geometric means
$|xy|<1$. These estimates show that $E$ is inscribed in $\Omega$. 
Furthermore, the set $C=\{(p^{-1}e^{it},pe^{iu}): 0\le t,u\le 2\pi\}$ is contained in 
$\partial\Omega\cap\partial E$. Let $\mu$ be the measure on $C$ of total mass 2,  invariant under rotations of
the coordinates. Given a linear operator $T(x,y)=(\alpha x+\beta y,\gamma x+\delta y)$ on $\bC^2$, we compute
\begin{align*}
&\int_C\Big(\frac{p^2}2(\alpha x+\beta y)\overline x+\frac{p^{-2}}2(\gamma x+\delta y)\overline y\Big)\,d\mu(x,y) =\\
\frac1{4\pi^2}&\int_0^{2\pi}
\int_0^{2\pi} \big((\alpha e^{it}+p^2\beta e^{iu})e^{-it}+(p^{-2}\gamma e^{it}+\delta e^{iu})e^{-iu}\big)
\,dt\,du =\alpha+\delta.
\end{align*}
By the criterium in Theorem 1.1 this guarantees that all $E$ are maximal.
\end{proof}

\section {A variant} 

In this section we address the problem of maximizing volume over translates of hermitian ellipsoids, inscribed in a
bounded pseudoconvex domain $\Omega\subset\bC^n$. We will derive a necessary condition, like (1.2),
 for the maximum. However, the similarity with Theorems 1.1, 1.2 stops there. The condition is not
sufficient for maximality; nor are maximizers unique, even if $\Omega$ is strongly pseudoconvex.

Denote by $\bE$ the space of all translates of hermitian ellipsoids in $\bC^n$. This space is in bijective 
correspondence with the space of pairs $(h,c)$ of positive definite hermitian forms $h$ and $c\in\bC^n$,
the ellipsoid corresponding to $(h,c)$ being
\[
E_{h,c}=\{z\in\bC^n:h(z-c,z-c)<1\}.
\]
\begin{thm} 
Suppose $\Omega\subset\bC^n$ is a bounded open set. Among $E\in\bE$ contained in $\Omega$ there is
at least one that has maximal volume. If $E_{h,c}$ is a maximizer, then there is a Borel measure $\mu$ on
$\partial\Omega\cap\partial E_{h,c}$ such that
\begin{align*}
&\int_{\partial\Omega\cap\partial E_{h,c}}(z-c)\,d\mu(z)=0\qquad\text{and}\\
&\int_{\partial\Omega\cap\partial E_ {h,c}} h(Tz,z-c)\,d\mu(z)=\tr T\quad\text{for all linear operators }T
\text{ on } \bC^n.
\end{align*}
\end{thm}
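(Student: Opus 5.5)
Existence goes exactly as in Section 3. Fix $D\in\bE_0$; every $E\in\bE$ is $c+AD$ with $A$ a linear operator and $c\in\bC^n$. Since $\Omega$ is bounded, the pairs $(A,c)$ with $c+AD\subset\Omega$ form a compact set: $c$ ranges in $\overline\Omega$ and $\|A\|$ is bounded. On this set $\vo(c+AD)=|\det A|^2\vo D$ is continuous and attains its maximum. The maximum is positive because $\Omega$, being open and nonempty, contains small balls, so the maximizing $A$ is invertible. No pseudoconvexity is needed, consistent with the hypothesis ``bounded open set''.

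For the necessary condition I will mimic the convex hull argument of Section 3. The real vector space is now $V=L(\bC^n)\times\bC^n$, and for $z\in\bC^n$ I use the real linear forms
\[
l_z(T,b)=\re h(Tz+b,z-c),\qquad l(T,b)=\re\tr T/n .
\]
The goal is to show that $l$ lies in the convex hull of $\{l_z:z\in K\}$, where $K=\partial\Omega\cap\partial E_{h,c}$. This set is compact and nonempty: if it were empty, a slightly dilated ellipsoid would still fit, contradicting maximality. By the separation argument in Section 3 it suffices to prove
\[
\re\tr T/n\le\max_{z\in K}\re h(Tz+b,z-c)\qquad\text{for all } (T,b)\in V.
\]
Fix $(T,b)$, let $M$ be this maximum, take $\lambda>M$, and replace $T$ by $S=\lambda\Id-T$. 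Then $\re h(Sz-b,z-c)>0$ on $K$, hence $>\delta$ on a neighborhood $N$ of $K$. Consider the family of ellipsoids
\[
\cE(t)=\{z: h\big(e^{tS}(z-c-tb')-\ldots\big)<1\}.
\]
Rather than write this down exactly, I will choose the first order variation: the center moves by $-tb$ and the matrix by $e^{-tS}$, i.e.\ $\cE(t)=c_t+e^{-tS}(E_{h,c}-c)$ with $c_t=c-tb+O(t^2)$. The defining function is then $h(z-c,z-c)+2t\,\re h(S(z-c)+b,z-c)+O(t^2)$. Matching this with the chosen forms may need the substitution $b\mapsto -b$ or $b\mapsto Sc$-type adjustments, and I will arrange the signs so that the first order term is $2t\,l$-type positive on $N$.

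Near $K$ the new ellipsoid then lies inside the old one. Away from $N$ the closure of $E_{h,c}$ is at positive distance from $\bC^n\setminus\Omega$, so by continuity $\cE(t)\subset\Omega$ for small $t>0$. Since $\vo\cE(t)=e^{-2t\re\tr S}\vo E_{h,c}$, maximality gives $\re\tr S\ge0$, so $\re\tr T/n\le\lambda$, and letting $\lambda\downarrow M$ proves the inequality.

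Carathéodory then yields a probability measure $\nu$ on $K$ with $\int l_z\,d\nu=l$. Set $\mu=n\nu$. Taking $T=0$ gives $\re h(b,\int(z-c)\,d\mu)=0$ for all $b$, hence $\int(z-c)\,d\mu=0$. Taking $b=0$ gives $\int\re h(Tz,z-c)\,d\mu=\re\tr T$. Since
\[
h(Tz,z-c)=h(T(z-c),z-c)+h(Tc,z-c),
\]
and the second term integrates to zero by the first identity, the real parts agree. Because both sides are $\bC$-linear in $T$, the full complex identity follows as in Section 3.

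The main obstacle is the bookkeeping in choosing the perturbation so that the first order term in $t$ is exactly $\re h(T(z-c)+b,z-c)$ with the right signs. I will handle it by parametrizing the ellipsoid as $\{z:h(e^{tS}(z-c)-tb,\,e^{tS}(z-c)-tb)<1\}$ and computing its derivative at $t=0$ directly.
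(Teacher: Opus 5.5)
\textbf{Gap in the perturbation step.} Your overall plan is the paper's. Existence goes by compactness as in Section 3. Then comes the convex-hull/separation argument on $L(\bC^n)\oplus\bC^n$, then a first-order perturbation of the maximizer with volume $e^{-2t\re\tr S}\vo E_{h,c}$, and finally the two identities are read off by setting $T=0$ or $b=0$ and removing $\re$. Your forms $l_z(T,b)=\re h(Tz+b,z-c)$ differ from the paper's forms $\re h(T(z-c)+b',z-c)$ (the paper works with $c=0$ and translates) by the linear bijection $(T,b)\mapsto(T,b+Tc)$ of $V$. This bijection fixes $l$, so your forms are a legitimate choice, and they give $h(Tz,z-c)$ directly. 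The final extraction you wrote is correct.

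The trouble is the perturbation, which is the only step with real content. You leave it unfinished, and what you do write there fails.

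First, $\re h(Sz-b,z-c)>0$ on $K$ does not follow from $\lambda>M$. With $S=\lambda\Id-T$,
\[
\re h(Sz-b,z-c)=\lambda\,\re h(z,z-c)-\re h(Tz+b,z-c),
\]
and on $\partial E_{h,c}$ we have $\re h(z,z-c)=1+\re h(c,z-c)$, which is not $1$ unless $c=0$.

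Second, the family you finally commit to, $\{z:h(e^{tS}(z-c)-tb,\,e^{tS}(z-c)-tb)<1\}$, has first-order term
\[
2\re h(S(z-c)-b,z-c)=2\big(\lambda-\re h(T(z-c)+b,z-c)\big)\quad\text{on }\partial E_{h,c}.
\]
Its positivity on $K$ needs $\lambda>\max_K\re h(T(z-c)+b,z-c)$. This differs from your $M$ by the nonconstant term $\re h(Tc,z-c)$. So that perturbation proves the inequality for the forms $\re h(T(z-c)+b,z-c)$, not for the $l_z$ you later use to conclude $\int h(Tz,z-c)\,d\mu=\tr T$. Your closing sentence names $\re h(T(z-c)+b,z-c)$ as the target, which contradicts your own definition of $l_z$.

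\textbf{Repair.} For the given $(T,b)$, shift the center by $t(Tc+b)$:
\[
\cE(t)=\{z:h\big(e^{tS}(z-c)-t(Tc+b),\,e^{tS}(z-c)-t(Tc+b)\big)<1\}.
\]
Its first-order term is
\[
2\re h(S(z-c)-Tc-b,z-c)=2\big(\lambda-\re h(Tz+b,z-c)\big)\quad\text{on }\partial E_{h,c}.
\]
This is positive on $K$, hence larger than $2\delta$ on a neighborhood $N$ of $K$. The volume is still $e^{-2t\re\tr S}\vo E_{h,c}$.

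Alternatively, do as the paper does. Translate $\Omega$ so that $c=0$; then $h(z,z)=1$ on $\partial E_h$, and $S=\lambda\Id-T$ with a center shift $\pm ta$ works with no cross term. Translate back at the end using $h(Tz,z-c)=h(T(z-c),z-c)+h(Tc,z-c)$ and $\int(z-c)\,d\mu=0$.

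With either fix, the rest of your argument goes through as written.
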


In this section we will refer to the maximizer $E_{h,c}$ above as {\sl maximal} (in $\Omega$).
\begin{proof}
The existence of the maximizer is argued in the same way as in section 3. As for the rest, it suffices to prove
when $c=0$, the general case will then follow upon translating $\Omega$. Thus we take
$E_{h,c}=E_h\in\bE_0$.

We let, as in section 3, $L(\bC^n)$ be the space of linear operators on $\bC^n$. Consider the space
$\bL=L(\bC^n)\oplus\bC^n$ as a real vector space, its dual $\bL'$, and linear forms $l_z\in\bL'$ ($z\in\bC^n)$,
$l\in\bL'$,
\[
l_z(T,a)=\re h(Tz+a,z),\qquad l(T,a)=\re\tr T/n,\qquad (T,a)\in L(\bC^n)\oplus \bC^n.
\]
The point again is that $l$ is in the convex hull of the set
\begin{equation} 
\{l_z:z\in\partial\Omega\cap\partial E_h\}\subset\bL'.
\end{equation}
As in section 3, this will follow from
\begin{equation}
\re\tr T/n\le\max\{\re h(Tz+a,z):z\in\partial\Omega\cap\partial E_h\}=M,\qquad (T,a)\in L(\bC^n)\oplus\bC^n.
\end{equation}

Given $T,a$, let $\lambda>M$ and $S=\lambda\Id-T$. If $z\in\partial\Omega\cap\partial E_h$ then 
$\re h(Sz+a,z)>0$. In fact, there are $\delta>0$ and a neighborhood $N\subset\bC^n$ of 
$\partial\Omega\cap\partial E_h$ such that $\re h(Sz+a,z)>\delta$ when $z\in N$. With $t\in\bR$ let
\[
\cE(t)=\{z\in\bC^n:h(e^{tS}z+ta,e^{tS}z+ta)<1\}.
\]
When $t>0$ is small and $z\in N$,
\[
h(e^{tS}z+ta,e^{tS}z+ta)=h(z,z)+2t\re h(Sz+a,z)+O(t^2)>h(z,z).
\]
Therefore $\overline{\cE(t)}\cap N\subset E_h\subset\Omega$. But
$\overline {\cE(0)}\setminus N=\overline{E_h}\setminus N$ is at positive distance to
$\bC^n\setminus\Omega$, hence $\overline {\cE(t)}\setminus N\subset \Omega$; in sum, 
$\cE(t)\subset\Omega$ for small $t>0$. Since $E_h$ was maximal,
\[
\vo E_h\ge\vo \cE(t)=|\det e^{-tS}|^2\vo E_h=e^{-2t\re\tr S}\vo E_h,
\]  
so that $\re\tr S\ge 0$ and $\re\tr T/n\le\lambda$. This being true for all  $\lambda>M$, (5.2) follows.

We conclude that $l$ is indeed in the convex hull of the set in (5.1), i.e., there is a Borel measure
$\nu$ on $\partial\Omega\cap\partial E_h$, of total mass $1$, such that $\int l_z\,d\nu(z)=l$. 
With $\mu=n\nu$,
\[
\int_{\partial\Omega\cap\partial E_h}\re h(Tz+a,z)\,d\mu(z)=nl(T)=\re\tr T,\qquad (T,a)\in L(\bC^n)\cap\bC^n.
\]
Setting $a=0$, respectively $T=0$, we obtain
\[
\int_{\partial\Omega\cap\partial E_h}\re h(Tz,z)\,d\mu(z)=\re\tr T,\qquad
\re h\big(a,\int_{\partial\Omega\partial E_h}z\,d\mu(z)\big)=0.
\]
As in section 3, we can remove ``$\re$'' to conclude
\[
\int_{\partial\Omega\cap\partial E_h}h(Tz,z)\,d\mu(z)=\tr T,
\qquad \int_{\partial\Omega\cap\partial E_h}z\,d\mu(z)=0,
\]
what was to be proved.
\end{proof}

Yet, the conditions in Theorem 5.1 are not sufficient for maximality, even when $n=1$ (when every domain
is pseudoconvex, and translates of hermitian ellipsoids are  circular discs). For example, let
\[
D=\{z\in\bC:|z-2|<2\}, \qquad E=\{z\in\bC:|z|<1\},
\]
and $\Omega=D\cup E$. Thus $E=E_{h,0}$ with $h(z,w)=z\bar w$. Let $\mu$ be the measure supported
on $\{i,-i\}\subset\partial\Omega\cap\partial E$ giving mass $1/2$ to each of $\pm i$. Clearly $\int z\,d\mu(z)=0$.
Any $T$ is multiplication by some $\tau\in\bC$, hence $\int h(Tz,z)\,d\mu(z)=\tau=\tr T$. The conditions in 
Theorem 5.1 are satisfied, but $E$ is not maximal: $D\subset\Omega$ has bigger area.

Before discussing (the lack of) uniqueness of maximal ellipsoids in $\bE$, recall that if $V\subset\bC^n$ is open, 
a function $u:V\to\bR$ of class $C^2$ is strongly plurisubharmonic if at every $z\in V$ the hermitian matrix
$\big(u_{z_j\bar z_k}(z)\big)_{j,k=1}^n$ is positive definite; and a bounded open set $\Omega\subset\bC^n$ is strongly pseudoconvex if every $p\in\partial\Omega$ has a neighborhood $V\subset\bC^n$ with a strongly
plurisubharmonic function $u:V\to\bR$, $du(p)\neq 0$ and $\Omega\cap V=\{z\in V:u(z)<0\}$.
\begin{thm} 
There is a strongly pseudoconvex domain $\Omega\subset\bC^2$ such that among ellipsoids $E\in\bE$
inscribed in $\Omega$ there is more than one of maximal volume.
\end{thm}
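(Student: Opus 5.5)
The plan is to exploit symmetry. I will build a bounded, connected, strongly pseudoconvex $\Omega\subset\bC^2$ invariant under $z\mapsto -z$, and arrange that every hermitian ellipsoid \emph{centered at $0$} inscribed in $\Omega$ is much smaller than some off-center inscribed ellipsoid. By the first part of Theorem 5.1 a maximizer $E_{h,c}$ exists, and $-E_{h,c}=E_{h,-c}$ is a maximizer too. If the maximizer were unique, then $c=-c$, so $c=0$. This contradicts the volume comparison, so there are at least two maximizers.

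\emph{Construction.} Put
\[
u(z_1,z_2)=|z_1^2-9|^2+C|z_2|^2+\varepsilon|z_1|^2,
\]
with constants $C,\varepsilon>0$. The function $u$ is even. It is strongly plurisubharmonic, since $|z_1^2-9|^2$ is plurisubharmonic and $C|z_2|^2+\varepsilon|z_1|^2$ is strictly so. I will take $\Omega$ to be the component of $\{u<81+\eta\}$ containing the points $(\pm3,0)$, with $\eta>0$ small and $81+\eta$ a regular value of $u$ (possible by Sard). Then $\Omega$ is bounded, because $u$ is proper. It is strongly pseudoconvex, with $u-81-\eta$ as a global defining function.

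Connectivity should hold because on the real segment $z_1=x\in[-3,3]$, $z_2=0$ we have
\[
u=(9-x^2)^2+\varepsilon x^2\le 81+\eta
\]
once $\varepsilon$ is small relative to $\eta$. The strict inequality can be arranged by taking $\varepsilon<\eta/9$ and checking the maximum, which is attained at $x=0$ or nearby. Near $(3,0)$ we have $u\approx 36|z_1-3|^2+C|z_2|^2+9\varepsilon$. Hence $\Omega$ contains the translated ellipsoid
\[
\{36|z_1-3|^2+C|z_2|^2<\kappa\}
\]
for some fixed $\kappa>0$ (for instance $\kappa=40$, after making $\varepsilon$ small), and its volume is a positive constant independent of $\eta$.

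\emph{Estimate for centered ellipsoids.} Let $E\in\bE_0$ with $E\subset\Omega$, and let $(a,b)$ be a unit vector along any of its (orthogonal) principal axes, with semi-axis $r$. The disc $\{\zeta(a,b):|\zeta|<r\}$ lies in $E\subset\Omega$. On it,
\[
u\ge 81-18\re(a^2\zeta^2)+(C|b|^2+\varepsilon|a|^2)|\zeta|^2+|a|^4|\zeta|^4 .
\]
Choose $\zeta$ with $|\zeta|\to r$ and $a^2\zeta^2\le 0$. This gives
\[
81+\big(18|a|^2+C|b|^2\big)r^2\le 81+\eta,
\]
so $r^2\le \eta/\min(18,C)$. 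Since the volume of $E$ is a constant times the product of the squared semi-axes, $\vo E=O(\eta^2)$. This is smaller than the volume of the off-center ellipsoid once $\eta$ is small, which completes the argument.

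The main obstacle is bookkeeping rather than ideas: choosing $\varepsilon$, $\eta$, $C$ compatibly so that three things hold simultaneously. First, the neck along the real $z_1$-axis stays open, so $\Omega$ is connected. Second, $81+\eta$ is a regular value. Third, the fixed ellipsoid near $(3,0)$ remains inside. Taking $C=1$, $\eta$ small, and then $\varepsilon\ll\eta$ should suffice.
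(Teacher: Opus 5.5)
Your proof is correct and follows the paper's approach: a strongly pseudoconvex domain invariant under $z\mapsto -z$ with a thin neck at the origin, so every inscribed $E\in\bE_0$ is tiny while an off-center ellipsoid in one lobe has volume bounded below. This forces the maximizer off the origin and makes its reflection a second maximizer. The paper uses Cassini's ovaloid $|z-p||z+p|<\lambda^2$, $\lambda\downarrow 1$, and bounds one complex semi-axis via the boundary point $(0,\sqrt{\lambda^2-1})$; your globally strongly plurisubharmonic $u$ makes strong pseudoconvexity immediate (its only critical values are $81$ and $9\varepsilon-\varepsilon^2/4$, so Sard is not needed), and your restriction to complex lines bounds both semi-axes.
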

\begin{proof}
Let $p=(1,0)\in\bC^2$, and with a positive number $\lambda$
\[
\Omega=\{z=(x,y)\in\bC^2:|z-p||z+p|<\lambda^2\},
\]
a figure that could be called Cassini's ovaloid\footnote{Cassini's oval signifying the locus of points in the
plane the product of whose distances to two fixed points equals a constant.}, and lemniscatoid when
$\lambda=1$. We will see that $\Omega$
fills the bill when $\lambda>1$ is close to $1$.

We start by computing derivatives of $u(z)=|z-p|^2|z+p|^2-\lambda^4$, a defining function of $\Omega$: 
\begin{gather}
u_x=2\bar x\big(|x|^2+|y|^2\big)-2 x,\qquad u_y=2\bar y\big(|x|^2+|y|^2+1\big),\nonumber\\
u_{x\bar x}=2\big(2|x|^2+|y|^2\big),\qquad u_{x\bar y}=2\bar xy,\qquad u_{y\bar y}=2\big(|x|^2+2|y|^2+1\big). 
\end{gather}
Thus the critical points of $u$ are at $(0,0)$, $(\pm1,0)$, with critical values $1-\lambda^4$ and
$-\lambda^4$. Unless $\lambda=1$, $du$ does not vanish on $\partial\Omega$, and (5.3) shows that $u$ 
is strongly plurisubharmonic away from $(0,0)$. Therefore $\Omega$ is strongly pseudoconvex unless $\lambda=1$.

When $\lambda>1$, $\Omega$ is connected. Indeed, if $z=(x,y)\in\Omega$ and $0\le t\le 1$, then
$z_t=(\re x+t\,\im x,ty)$ satisfies $|z_t-p|\le|z-p|$, $|z_t+p|\le|z+p|$, whence $z_t\in\Omega$. Along the paths
$z_t$ we can deform $\Omega$ to its intersection with the $\re x$--axis, which is the connected interval 
$(-\sqrt{\lambda^2+1},\sqrt{\lambda^2+1})$. We conclude $\Omega$ is connected, so 
a strongly pseudoconvex domain.

Since $\Omega$ is symmetric, if $E\in\bE$ is maximal in it, then $-E\subset\Omega$ is also maximal. Therefore
the inscribed maximal ellipsoid can be unique only if $E=-E$, i.e., if $E$ is centered at $0$, or $E\in\bE_0$.
The question is now whether an $E\in\bE_0$ can be maximal. One can estimate the volume of any inscribed
$E\in\bE_0$ as follows. If $z=(x,y)\in\Omega$ then $|z-p|<\lambda$ or $|z+p|<\lambda$. Accordingly,
$|x-1|<\lambda$ or $|x+1|<\lambda$. In either case $|x|<\lambda+1$, and also $|y|<\lambda$. This implies that
$\Omega$ is contained in the ball $\{\zeta\in\bC^2:|\zeta|<2\lambda+1\}$, and so any ellipsoid inscribed 
in $\Omega$ has its half-axes $\le 2\lambda+1$. But since $(0,\sqrt{\lambda^2-1})\in\partial\Omega$, 
one---and because of the hermitian condition, in fact two---half--axes must have length $\le \sqrt{\lambda^2-1}$.
It follows that 
\[
\vo E\le \pi^2(\lambda^2-1)(2\lambda+1)^2/2.
\]
At the same time, if $\lambda>1$, the ball $B=\{z\in\bC^2:|z-p|<1/3\}\in\bE$ is inscribed, since 
$|z-p|<1/3$ implies 
$|z+p|<7/3$ and $|z-p||z+p|<7/9<\lambda^2$. When $\lambda$ is close enough to $1$, 
\(
\vo B=\pi^2/162>\vo E.
\)
Therefore the maximal ellipsoid in $\bE$ is not in $\bE_0$, and is necessarily not unique.
\end{proof}

\cite{A+22} already features a convex $\Omega\subset\bC$ in which the maximal inscribed $E\in\bE$ is not
unique (any rectangle different from a square will do). It is not impossible that this example can also be developed into
a similar, strongly pseudoconvex example in $\bC^2$.


\begin{thebibliography}{A+22}

\bibitem[A+22]{A+22}  Jorge Arocha, Javier Bracho, Luis Montejano, Extremal inscribed and 
circumscribed complex ellipsoids. Beitr. Algebra Geom. 63 (2022) 349--358


\bibitem[B97]{B97} Keith Ball, An elementary introduction to modern convex geometry. Flavors of geometry, 
1-–58, Math. Sci. Res. Inst. Publ., 31, Cambridge Univ. Press, Cambridge, 1997

\bibitem[D12]{D12} Jean--Pierre Demailly, Complex analytic and differential geometry. 
http://www--fourier.ujf--grenoble.fr/$\sim$demailly/manuscripts/agbook.pdf

\bibitem[G67]{G67} Mikhail Gromov, On a geometric hypothesis of Banach. (Russian) Izv. Akad. Nauk SSSR Ser. Mat. 31 (1967) 1105–-1114.

\bibitem[H91]{H91} Lars H\"ormander, An introduction to complex analysis in several variables, 3rd edition. North Holland, 
Amsterdam etc., 1991.

\bibitem[J48]{J48}  Fritz John, Extremum problems with inequalities as subsidiary conditions. Studies and Essays Presented to R. Courant on his 60th Birthday, January 8, 1948, 187–-204, Interscience Publishers, New York, 1948. 

\bibitem[L24]{L24} L\'aszl\'o Lempert, Two variational problems in K\"ahler geometry, arxiv:2405.00869 

\bibitem[M97]{M97} Daowei Ma, Carathéodory extremal maps of ellipsoids. J. Math. Soc. Japan 49 (1997) 723–-739. 

\bibitem[R86]{R86} V. V. Rabotin, The Carathéodory extremal problem in a class of holomorphic mappings of bounded circular domains. (Russian) Sibirsk. Mat. Zh. 27 (1986) 143–149, 199–200.



\end{thebibliography}
\end{document}